\newtheorem*{defi-intro}{Definition}
\newtheorem*{Theo}{Theorem}
\newtheorem*{TheoremBruzzoGrassi}{Theorem 4.11 of \cite{ugo_grassi}}
\newtheorem{Theorem}{Theorem}
\newtheorem{Corollary}[Theorem]{Corollary}
\newtheorem{Proposition}[Theorem]{Proposition}
\theoremstyle{definition}
\theoremstyle{definition}
\theoremstyle{remark}
\newtheorem{remark}[Theorem]{Remark}
\theoremstyle{remark}
\title[On the codimension of Noether-Lefschetz loci for toric threefolds]{On the codimension of Noether-Lefschetz loci\\for toric threefolds}
\author{Valeriano Lanza and Ivan Martino}
\date{\today}
\begin{document}
\begin{abstract}
In this manuscript we sharpen the lower bound on the codimension of the irreducible components of the Noether-Lefschetz locus of surfaces in projective toric threefolds given in \cite{ugo_grassi}.
We also provide a simpler proof of Theorem 4.11 in \cite{ugo_grassi}, which allows one to avoid some technical assumptions.
\end{abstract}
\maketitle

Let $\mathbb{P}_{\Sigma}$ be a projective toric threefold with orbifold singularities, $\beta$ a nef class in the class group $\operatorname{Cl}(\mathbb{P}_{\Sigma})$, and $\mathcal M_{\beta}$ the moduli space of surfaces in $\mathbb{P}_{\Sigma}$ of degree $\beta$ modulo automorphisms of $\mathbb{P}_{\Sigma}$.
The Noether-Lefschetz locus $U_{\beta}$ with respect to $\beta$ is the subscheme of $\mathcal M_{\beta}$ corresponding to quasi-smooth surfaces whose Picard number is strictly larger than the one of $\mathbb{P}_{\Sigma}$.

Let $\eta$ be a primitive ample Cartier class and suppose that $\beta$ is a Cartier divisor of the form $-K_{\mathbb{P}_{\Sigma}} + n \eta$, where $K_{\mathbb{P}_{\Sigma}}$ is the canonical divisor of $\mathbb{P}_{\Sigma}$ and $n$ is a non-negative integer; in this specific case, we denote $U_{\beta}$ by $U_{\eta}(n)$. Note that, to have such a $\beta$, one has to implicitly assume that $K_{\mathbb{P}_{\Sigma}}$ is Cartier, which forces $\mathbb{P}_{\Sigma}$ to be Gorenstein.

Assume the multiplication map
\begin{equation*}
		\operatorname{H}^0(\mathbb{P}_{\Sigma},\mathcal{O}_{\mathbb{P}_{\Sigma}}(\beta))\otimes \operatorname{H}^0(\mathbb{P}_{\Sigma},\mathcal{O}_{\mathbb{P}_{\Sigma}}(n\eta))\rightarrow \operatorname{H}^0(\mathbb{P}_{\Sigma},\mathcal{O}_{\mathbb{P}_{\Sigma}}(\beta+n\eta))                                                                                                                           
\end{equation*}
to be surjective; we shall refer to this as ``condition $(\star)$.'' Hence, the very general quasi-smooth surface in the linear system defined by $\beta$ has the same Picard number as $\mathbb{P}_{\Sigma}$, see Theorem 3.5 of \cite{BruzzoGrassi-Picard-group-hypersurfaces}; in other words $U_{\eta}(n)$ is a countable union of closed subschemes of positive codimension. Bruzzo and Grassi prove the following:

\begin{TheoremBruzzoGrassi} 
Assume that the multiplication maps
\[
 \operatorname{H}^0(\mathbb{P}_{\Sigma},\mathcal{O}_{\mathbb{P}_{\Sigma}}(\beta))\otimes \operatorname{H}^0(\mathbb{P}_{\Sigma},\mathcal{O}_{\mathbb{P}_{\Sigma}}(k\eta))\rightarrow \operatorname{H}^0(\mathbb{P}_{\Sigma},\mathcal{O}_{\mathbb{P}_{\Sigma}}(\beta+k\eta))                                                                                                                           
\]
are surjective for all positive integer $k$ (in particular condition $(\star)$ is fulfilled). Assume also the vanishings
\[
 \operatorname{H}^1(\mathbb{P}_{\Sigma},\mathcal{O}_{\mathbb{P}_{\Sigma}}(\beta-\eta))=\operatorname{H}^2(\mathbb{P}_{\Sigma},\mathcal{O}_{\mathbb{P}_{\Sigma}}(\beta-\eta))=\operatorname{H}^2(\mathbb{P}_{\Sigma},\mathcal{O}_{\mathbb{P}_{\Sigma}}(\beta-2\eta))=0\,.
\]

Then, for any irreducible component $U$ of $U_{\eta}(n)$, 
	\begin{center}
	\begin{tabular}{crcl}
	i) & $\eta \ \ (-1)-\mbox{regular}$ &$\Rightarrow$ & $\operatorname{codim}U\geq n+1$;\\
	ii) & $\eta \ \ 0-\mbox{regular}$ &$\Rightarrow$ & $\operatorname{codim}U \geq n$.
	\end{tabular}
	\end{center}
\end{TheoremBruzzoGrassi}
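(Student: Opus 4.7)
The starting point is to reduce the statement to a purely algebraic question in the toric Jacobian ring via infinitesimal variation of Hodge structure. For a very general point $[S_f]$ of a component $U$, let $R(f)$ denote the Artinian Gorenstein ring obtained from the Cox ring of $\mathbb{P}_\Sigma$ modulo the toric Jacobian ideal of $f$. Standard deformation theory identifies $T_{[S_f]}\mathcal{M}_\beta$ with $R(f)_\beta$ modulo the tangent action of $\mathrm{Aut}(\mathbb{P}_\Sigma)$, while the Batyrev--Cox--Mavlyutov description gives
\[
H^{2,0}(S_f) \cong R(f)_{n\eta}, \qquad H^{1,1}_{\mathrm{prim}}(S_f) \cong R(f)_{\beta + n\eta}, \qquad H^{0,2}(S_f) \cong R(f)_{2\beta + n\eta},
\]
and turns the Kodaira--Spencer cup product into multiplication inside $R(f)$. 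Consequently, for any new primitive Hodge class $\lambda \in R(f)_{\beta+n\eta}$ carried by $S_f$, the tangent space $T_{[S_f]}U$ is contained in the kernel of
\[
\mu_\lambda: R(f)_\beta \longrightarrow R(f)_{2\beta + n\eta}, \qquad g \longmapsto g\cdot\lambda .
\]
The theorem is thus reduced to proving $\dim(\lambda\cdot R(f)_\beta) \geq n+1$ (respectively $n$) for every nonzero such $\lambda$.

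The second step converts the hypotheses of the theorem into statements inside $R(f)$. The surjectivity of $H^0(\beta)\otimes H^0(k\eta)\to H^0(\beta+k\eta)$, together with the vanishings of $H^1(\beta-\eta)$, $H^2(\beta-\eta)$ and $H^2(\beta-2\eta)$, yields via a Koszul-type comparison the surjectivity of the Jacobian-ring multiplications $R(f)_\beta\otimes R(f)_{k\eta}\to R(f)_{\beta+k\eta}$ in the range relevant to the problem. In parallel, $r$-regularity of $\eta$ gives surjectivity of the graded multiplications $R(f)_a\otimes R(f)_\eta \to R(f)_{a+\eta}$ whenever $a$ lies above a threshold that shifts by exactly one when $r$ drops from $0$ to $-1$; this shift is what ultimately accounts for the discrepancy between the bounds $n$ and $n+1$.

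The third step produces the linearly independent elements in $\lambda\cdot R(f)_\beta$. Using Macaulay duality for $R(f)$ (whose socle lies in degree $2\beta + 2n\eta$), the perfect pairing $R(f)_{2\beta+n\eta}\times R(f)_{n\eta}\to\mathbb{C}$ identifies $(\lambda\cdot R(f)_\beta)^{\perp}$ with the annihilator of $\lambda$ inside $R(f)_{n\eta}$. The task becomes to bound the codimension of this annihilator from below, for which one exploits the chain
\[
R(f)_0 \xrightarrow{\cdot \eta} R(f)_\eta \xrightarrow{\cdot \eta} \cdots \xrightarrow{\cdot \eta} R(f)_{n\eta}
\]
together with the surjectivities secured in the second step: the intermediate multiplications by $\eta$ transport a single nonzero pairing value against $\lambda$ back through the tower, generating $n+1$ (respectively $n$) independent linear functionals depending on whether or not the degree-$0$ step survives, i.e.\ depending on the regularity index of $\eta$.

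The main obstacle is the last step. One must verify that the $\eta$-multiplications genuinely yield independent pairings against $\lambda\cdot R(f)_\beta$ and do not collapse through some hidden Koszul relation, and one must use the regularity hypothesis on $\eta$ sharply enough so that the $(-1)$-regular case really buys one extra dimension over the $0$-regular case. Threading the $H^1$- and $H^2$-vanishings carefully through the Jacobian-ideal bookkeeping is the delicate point; once that is in place, the codimension bounds claimed in (i) and (ii) drop out of the first-step inequality $\mathrm{codim}_{\mathcal{M}_\beta} U \geq \dim(\lambda\cdot R(f)_\beta)$.
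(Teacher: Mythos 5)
Your Steps 1 and 2 are the standard reduction and match the strategy behind the cited theorem (and the skeleton the present paper reuses): identify the tangent space to a component with the kernel of $\mu_\lambda\colon R(f)_\beta\to R(f)_{2\beta+n\eta}$, so that $\operatorname{codim}U\geq\dim(\lambda\cdot R(f)_\beta)$, and translate the cohomological hypotheses into multiplicative statements in the Jacobian ring. The genuine content of the theorem, however, is entirely in your Step 3, and there the proposal has a real gap. The actual argument is a contrapositive run through a \emph{proper subspace}: one takes a base-point-free linear system $W\subseteq\operatorname{H}^0(\mathbb{P}_\Sigma,\mathcal{O}_{\mathbb{P}_\Sigma}(\beta))$ lying in (the preimage of) $\ker\mu_\lambda$, of codimension $c$, and shows that if $c\leq n$ (resp.\ $c\leq n-1$) then the map $W\otimes\operatorname{H}^0(\mathcal{O}_{\mathbb{P}_\Sigma}(n\eta))\to\operatorname{H}^0(\mathcal{O}_{\mathbb{P}_\Sigma}(\beta+n\eta))$ is still surjective; then $\lambda\cdot R(f)_{\beta+n\eta}=\lambda\cdot W\cdot R(f)_{n\eta}=0$, and since $R(f)_{\beta+n\eta}$ pairs perfectly with itself into the socle $R(f)_{2\beta+2n\eta}$, this forces $\lambda=0$, a contradiction. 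Your Step 2 only records surjectivity of multiplication by the \emph{full} space $R(f)_\beta$; the whole difficulty is that it persists after cutting down by $c$ codimensions, and this is exactly what the flag $W=W_c\subset\cdots\subset W_0=\operatorname{H}^0(\mathcal{O}_{\mathbb{P}_\Sigma}(\beta))$, the syzygy bundles $M_i$, the regularity of $\wedge^pM_i$ with respect to $\eta$, and Green's descending induction (Lemma 2 of \cite{Green-A-new-proof-of-the-explicit}) are for. None of that machinery appears in your outline.

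The mechanism you propose in its place does not substitute for it. The tower $R(f)_0\to R(f)_\eta\to\cdots\to R(f)_{n\eta}$ produces elements of $\lambda\cdot R(f)_{k\eta}$ living in the distinct graded pieces $R(f)_{\beta+(n+k)\eta}$, so ``transporting a nonzero pairing value back through the tower'' does not by itself yield $n+1$ linearly independent elements of the single space $\lambda\cdot R(f)_\beta$ (equivalently, of $\lambda\cdot R(f)_{n\eta}$ after your duality step); independence of such products is precisely the point at issue, and no Koszul-type input is identified that would guarantee it. Likewise, the claim that the $(-1)$- versus $0$-regularity of $\eta$ shifts ``a threshold by exactly one'' is asserted rather than derived: in the source the discrepancy arises from the regularity of tensor powers of the syzygy bundle $M_0$ (Proposition 3.5 of \cite{ugo_grassi}), i.e.\ from how much regularity is lost in forming $\wedge^pM_0$ when $\eta$ is only $0$-regular --- indeed, the present paper's improvement consists exactly in showing that, with the right version of \cite[Corollary 1.8.10]{laza1}, nothing is lost there. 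As it stands, the proposal sets up the correct frame but omits the engine that produces the bound.
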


Nevertheless, they provide many examples where the chosen ample class $\eta$ is $0$-regular and the locus $U_{\eta}(n)$ has a component of codimension precisely $n+1$, see Section 5 in \cite{ugo_grassi}. 
These are the natural generalizations of the examples that achieve the \emph{minimal} bound in the classical setting; namely, the surfaces in $\mathbb{P}^3$ that contain a line \cite{Voisin-Composantes, Green-KoszulCohomology, Green-A-new-proof-of-the-explicit, Green-Components}.
For this reason one is led to believe that the bound in \textit{ii)} may not be sharp: we show here that this is precisely the case.

\begin{Theo}
	Let $\mathbb{P}_\Sigma$ be a Gorenstein projective toric threefold with orbifold singularities, $\eta$ a primitive ample Cartier class which is $0$-regular, and $\beta$ a Cartier divisor of the form $-K_{\mathbb{P}_{\Sigma}} + n \eta$, where $K_{\mathbb{P}_{\Sigma}}$ is the canonical divisor of $\mathbb{P}_{\Sigma}$ and $n$ is a non-negative integer.
	
	Assume condition $(\star)$.
	 If $\operatorname{H}^1(\mathbb{P}_{\Sigma}, \mathcal{O}_{\mathbb{P}_{\Sigma}}(\beta-\eta))=\operatorname{H}^2(\mathbb{P}_{\Sigma}, \mathcal{O}_{\mathbb{P}_{\Sigma}}(\beta-2\eta))=0$, then \[\operatorname{codim}U\geq n+1\] for every irreducible component $U$ of $U_{\eta}(n)$.
\end{Theo}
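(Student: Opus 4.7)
At a very general point $[X]$ of an irreducible component $U$ of $U_\eta(n)$, the Picard-rank jump provides a primitive Hodge class $\lambda\in H^{1,1}_{\mathrm{prim}}(X,\mathbb{C})\cap H^2(X,\mathbb{Z})$. Via the Carlson--Griffiths residue description adapted to the toric setting (Batyrev--Cox), this class is represented by an element $[\ell]$ in an appropriate graded piece of the toric Jacobian ring $R(f)$ of a defining section $f$ of $X$. As in Bruzzo--Grassi, the codimension of $U$ at $[X]$ is bounded below by the rank of the image of the multiplication-by-$[\ell]$ map on a specific graded piece of $R(f)$; under $0$-regularity of $\eta$ alone their argument produces $n$ linearly independent classes in this image, and the task is to exhibit one more using the hypothesized vanishings.

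The plan is to fix a general $\sigma\in H^0(\mathbb{P}_\Sigma,\mathcal{O}(\eta))$, cutting out a prime divisor $H$, and to exploit the Koszul-type short exact sequences
\[
0\to\mathcal{O}(\beta-k\eta)\xrightarrow{\cdot\sigma}\mathcal{O}(\beta-(k-1)\eta)\to\mathcal{O}_H(\beta-(k-1)\eta)\to 0,\qquad k=1,2.
\]
Condition $(\star)$, combined with the Castelnuovo--Mumford $0$-regularity of $\eta$ (which propagates surjectivity of multiplication at higher twists), allows one to build an inductive chain of $n$ elements whose images under $\mu_{[\ell]}$ are linearly independent. To obtain the $(n+1)$-st element one lifts through the long exact cohomology sequences of the above Koszul sequences; the hypotheses $H^1(\mathcal{O}(\beta-\eta))=0$ and $H^2(\mathcal{O}(\beta-2\eta))=0$ force the two relevant connecting morphisms to vanish, thereby producing the required lift and the extra independent class. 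In effect, these two vanishings replace the three used in Theorem 4.11 of \cite{ugo_grassi} and allow the inductive argument to run one additional step further than in the $0$-regular case treated there.

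The main obstacle I expect is verifying that the $(n+1)$-st class is genuinely independent of the previous $n$, rather than collapsing into their span in the image of $\mu_{[\ell]}$. I would address this via the Macaulay-type pairing available on the Gorenstein toric Jacobian ring, combined with the generic choice of both $\sigma$ and $[X]\in U$, which prevents any spurious relation from forcing dependence. A secondary technical point is transferring the surjectivity supplied by $(\star)$ to an analogous statement inside $R(f)$ (i.e.\ modulo the Jacobian ideal); this should reduce to routine degree-counting once one tracks the grading by $\mathrm{Cl}(\mathbb{P}_\Sigma)$ and uses the Gorenstein hypothesis to identify the socle degree where $[\ell]$ lives.
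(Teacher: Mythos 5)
The framework you set up (IVHS, toric Jacobian ring, codimension bounded below by the rank of multiplication by the extra Hodge class) is the right ambient picture, but your proposal is missing the step that actually produces the bound $n+1$, and the mechanism you substitute for it does not work as described. In the Green/Bruzzo--Grassi architecture one does not exhibit $n+1$ independent classes in the image of $\mu_{[\ell]}$ one at a time; the argument is contrapositive. One shows that for \emph{every} base-point-free subsystem $W\subset \operatorname{H}^0(\mathbb{P}_\Sigma,\mathcal{O}_{\mathbb{P}_\Sigma}(\beta))$ of codimension $c\leq n$ the multiplication map $W\otimes \operatorname{H}^0(\mathbb{P}_\Sigma,\mathcal{O}_{\mathbb{P}_\Sigma}(n\eta))\to \operatorname{H}^0(\mathbb{P}_\Sigma,\mathcal{O}_{\mathbb{P}_\Sigma}(\beta+n\eta))$ is surjective; applied to the $W$ determined by the tangent space of a component $U$ with $\operatorname{codim}U\leq n$, this surjectivity kills the class $[\ell]$ and gives a contradiction. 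That surjectivity is equivalent to $\operatorname{H}^1(\mathbb{P}_\Sigma,M_c(n\eta))=0$ for the syzygy bundle $M_c=\ker(W\otimes\mathcal{O}_{\mathbb{P}_\Sigma}\to\mathcal{O}_{\mathbb{P}_\Sigma}(\beta))$, which in turn is deduced from the $p$-regularity (with respect to $\eta$) of $\wedge^p M_0$ together with Green's flag induction. The single new ingredient that upgrades the bound from $n$ to $n+1$ is the observation that Lazarsfeld's Corollary 1.8.10 --- an $m$-regular locally free sheaf has $(pm)$-regular $p$-th tensor, exterior and symmetric powers --- remains valid for regularity measured against any ample, globally generated, $0$-regular line bundle, not just $\mathcal{O}_{\mathbb{P}}(1)$. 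This replaces Proposition 3.5 of Bruzzo--Grassi, which loses one degree of regularity when $\eta$ is only $0$-regular. Your proposal contains no analogue of this statement, and without it the inductive chain you describe cannot reach the $(n+1)$-st step.

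Two further concrete problems. First, the hypotheses $\operatorname{H}^1(\mathcal{O}_{\mathbb{P}_\Sigma}(\beta-\eta))=\operatorname{H}^2(\mathcal{O}_{\mathbb{P}_\Sigma}(\beta-2\eta))=0$ are not used to kill connecting morphisms in restriction-to-$H$ Koszul sequences; they are exactly what is needed to prove that $M_0$ is $1$-regular (they give the vanishing of $\operatorname{H}^2(M_0(-\eta))$ and $\operatorname{H}^3(M_0(-2\eta))$ respectively, the remaining terms being handled by the $0$-regularity of $\eta$ and condition $(\star)$). Second, the issue you flag yourself --- that the $(n+1)$-st class might collapse into the span of the previous $n$ --- is precisely the content that a proof must supply, and an appeal to genericity of $\sigma$ and $[X]$ plus an unspecified Macaulay-type pairing on the $\operatorname{Cl}(\mathbb{P}_\Sigma)$-graded Jacobian ring does not supply it. As it stands the proposal reproduces the setup common to all proofs in this circle of ideas but leaves the decisive regularity statement, and hence the theorem, unproved.
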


\begin{remark}
 The bounds in Theorem 4.12, Corollary 4.13, and Theorem 4.15 of \cite{ugo_grassi} can be accordingly upgraded. Notice that the hypoteses of each of the results just mentioned imply those of Theorem 4.11 of \cite{ugo_grassi}, and so, a fortiori, ours.
\end{remark}

The new bound is sharp for the following threefolds and suitable choices of $\eta$, see Section 5 of \cite{ugo_grassi}:
\begin{itemize}
	\item toric simplicial Gorenstein threefolds with nef anticanonical bundle;
	\item the projective space blown-up along a line, $\hat{\mathbb{P}}^3$; 
	\item $\mathbb{P}^1\times \mathbb{P}^2$;
	\item a small resolution of the cone over a quartic surface in $\mathbb{P}^3$;
	\item the weighted projective space $\mathbb{P}[1,1,2,2]$.
\end{itemize}

\bigskip
\begin{center}
 $\ast$~$\ast$~$\ast$
\end{center}
\medskip

Before proving the theorem, we first notice that, for what concerns Theorem 4.11 of \cite{ugo_grassi}, the discrepancy between the case where $\eta$ is $(-1)$-regular and the one where it is just $0$-regular has its origin in Proposition 3.5 of \cite{ugo_grassi}, which inspects the regularity (with respect to $\eta$) of the tensor powers of a $1$-regular locally free sheaf $\mathcal{F}$ on $\mathbb{P}_{\Sigma}$.

Our idea is to use instead a natural generalization of Corollary 1.8.10 of \cite{laza1}.

\begin{Proposition}\label{Prop:CorLaza}
Let $X$ be a projective variety together with an ample line bundle $B$ which is globally generated and $0$-regular.
If $\mathcal{F}$ is an $m$-regular locally free sheaf on $X$, then the $p$-fold tensor power $\mathcal{F}^{\otimes p}$ is $(pm)$-regular. In particular, $\Lambda^p\mathcal{F}$ and $S^p\mathcal{F}$ are likewise $(pm)$-regular.
\end{Proposition}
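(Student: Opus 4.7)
The plan is to argue by induction on $p$, closely mirroring the proof of \cite[Corollary 1.8.10]{laza1} for the case $X=\mathbb{P}^n$ and $B=\mathcal{O}(1)$, with the hypothesis that $B$ is globally generated and $0$-regular playing the role that very ampleness of $\mathcal{O}(1)$ plays there. The base case $p=1$ is the hypothesis. After replacing $\mathcal{F}$ by $\mathcal{F}\otimes B^{m}$ (still locally free and now $0$-regular), I may assume without loss of generality that $m=0$, so the goal becomes: if $\mathcal{F}$ is a $0$-regular locally free sheaf, then $\mathcal{F}^{\otimes p}$ is $0$-regular.

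Mumford's theorem, whose hypotheses are met since $B$ is globally generated and $0$-regular, ensures that a $0$-regular sheaf is globally generated. Setting $V:=\operatorname{H}^{0}(X,\mathcal{F})$, the evaluation map fits into the short exact sequence
\begin{equation*}
0\to M\to V\otimes\mathcal{O}_{X}\to\mathcal{F}\to 0.
\end{equation*}
Tensoring with $\mathcal{F}^{\otimes(p-1)}\otimes B^{-i}$ for $i\geq 1$ and taking cohomology, the inductive hypothesis that $\mathcal{F}^{\otimes(p-1)}$ is $0$-regular kills the terms $V\otimes\operatorname{H}^{i}(\mathcal{F}^{\otimes(p-1)}\otimes B^{-i})$, and the long exact sequence reduces the desired vanishing $\operatorname{H}^{i}(\mathcal{F}^{\otimes p}\otimes B^{-i})=0$ to the auxiliary statement
\begin{equation*}
\operatorname{H}^{i+1}(M\otimes\mathcal{F}^{\otimes(p-1)}\otimes B^{-i})=0 \qquad \text{for all } i\geq 1.
\end{equation*}

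The main obstacle is precisely this last vanishing: one has to control the cohomology of the mixed sheaf $M\otimes\mathcal{F}^{\otimes(p-1)}$, which is neither a tensor power of $\mathcal{F}$ nor a power of $B$. The strategy is a nested induction. First one checks that $M$ itself is $1$-regular, directly from its defining sequence and the Mumford-type surjectivity $V\otimes\operatorname{H}^{0}(X,B^{k})\twoheadrightarrow\operatorname{H}^{0}(X,\mathcal{F}\otimes B^{k})$, which is available by the $0$-regularity of $\mathcal{F}$ and $B$. The analogous vanishings for $M\otimes\mathcal{F}^{\otimes(p-1)}$ are then propagated by a further Koszul-type iteration of the same evaluation sequence, at each stage invoking the multiplication surjectivity granted by the $0$-regularity of $B$ together with the inductively established $0$-regularity of $\mathcal{F}^{\otimes j}$ for $j<p$. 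It is precisely here that the hypothesis ``$B$ is $0$-regular'' becomes indispensable: in \cite{laza1} it is delivered for free by the very ampleness of $\mathcal{O}_{\mathbb{P}^{n}}(1)$, whereas in the present setting it must be assumed.

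For $\Lambda^{p}\mathcal{F}$ and $S^{p}\mathcal{F}$: in characteristic zero, both are direct summands of $\mathcal{F}^{\otimes p}$ via the Schur--Weyl decomposition (that is, the action of the symmetric group $\mathfrak{S}_{p}$), so cohomology vanishings pass to them, and their $(pm)$-regularity is inherited from that of $\mathcal{F}^{\otimes p}$.
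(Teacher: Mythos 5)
Your argument is correct and is essentially the paper's: the iterated evaluation/syzygy sequences you use to control $M\otimes\mathcal{F}^{\otimes(p-1)}$ are exactly the linear resolution $\cdots\to\bigoplus B^{-m-1}\to\bigoplus B^{-m}\to\mathcal{F}\to 0$ that the paper extracts from the global generation and $0$-regularity of $B$ via Mumford's theorem (cf.\ Proposition 1.8.8 of \cite{laza1}), after which both proofs run the induction on $p$ as in Corollary 1.8.10 of \cite{laza1} and treat $\Lambda^p\mathcal{F}$ and $S^p\mathcal{F}$ as direct summands of $\mathcal{F}^{\otimes p}$ in characteristic zero. The only point left implicit in your sketch is that the climb up the resolution terminates because $\operatorname{H}^q$ vanishes for $q>\dim X$.
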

\begin{proof}
The proof of Corollary 1.8.10 of \cite{laza1}, which is stated for locally free sheaves on a projective space $\mathbb{P}$ 
which are regular with respect to $\mathcal{O}_{\mathbb{P}}(1)$, also works in this more general context. The $0$-regularity of $B$ ensures the existence of a linear resolution for $\mathcal{F}$ of the form
\[
 \dots\rightarrow\bigoplus B^{-m-2}\rightarrow\bigoplus B^{-m-1}\rightarrow\bigoplus B^{-m}\rightarrow\mathcal{F}\rightarrow 0\,,
\]
cf. Proposition 1.8.8 of \cite{laza1} and Corollary 3.3 of \cite{ugo_grassi}.
\end{proof}

It is well-known that ample line bundles are always globally generated on projective toric varieties.

\subsection*{Proof of the Theorem}
What follows logically corresponds to \S4.1 of \cite{ugo_grassi}. We pick a base-point-free linear system $W$ in $\operatorname{H}^0(\mathbb{P}_\Sigma,\mathcal{O}_{\mathbb{P}_\Sigma}(\beta))$ 
and we select a complete flag of linear subspaces
\[
	W=W_c\subset W_{c-1}\subset \dots \subset W_1\subset W_0=\operatorname{H}^0(\mathbb{P}_\Sigma,\mathcal{O}_{\mathbb{P}_\Sigma}(\beta))\,.
\]
This means that the codimension of $W_i$ is $i$ (and in particular $c$ is the codimension of $W$).
We define the vector bundle $M_i$ over $\mathbb{P}_\Sigma$ as the kernel of the natural (surjective) map $W_i \otimes \mathcal{O}_{\mathbb{P}_\Sigma}\rightarrow\mathcal{O}_{\mathbb{P}_\Sigma}(\beta)$.

\begin{Proposition}
	$M_0$ is $1$-regular with respect to $\eta$.
\end{Proposition}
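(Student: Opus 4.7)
$1$-regularity of $M_0$ with respect to $\eta$ amounts to the three vanishings
\[
H^1(\mathbb{P}_\Sigma, M_0) = 0, \qquad H^2(\mathbb{P}_\Sigma, M_0(-\eta)) = 0, \qquad H^3(\mathbb{P}_\Sigma, M_0(-2\eta)) = 0,
\]
since higher cohomology is automatically trivial from $\dim \mathbb{P}_\Sigma = 3$. My plan is to tensor the defining short exact sequence of $M_0$,
\[
0 \longrightarrow M_0 \longrightarrow H^0(\mathbb{P}_\Sigma, \mathcal{O}_{\mathbb{P}_\Sigma}(\beta)) \otimes \mathcal{O}_{\mathbb{P}_\Sigma} \longrightarrow \mathcal{O}_{\mathbb{P}_\Sigma}(\beta) \longrightarrow 0,
\]
by $\mathcal{O}_{\mathbb{P}_\Sigma}(k\eta)$ for $k = 0, -1, -2$, and extract each vanishing from the resulting long exact sequence in cohomology.

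For $k=0$ the evaluation map on global sections is simply the identity, hence surjective, and so $H^1(M_0)$ injects into $H^0(\mathcal{O}_{\mathbb{P}_\Sigma}(\beta))\otimes H^1(\mathcal{O}_{\mathbb{P}_\Sigma})$; the latter vanishes on a projective toric variety (equivalently, it is the $i=1$ instance of the $0$-regularity of $\eta$).

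For $k=-1$ I would read off the fragment
\[
H^1(\mathcal{O}_{\mathbb{P}_\Sigma}(\beta-\eta)) \longrightarrow H^2(M_0(-\eta)) \longrightarrow H^0(\mathcal{O}_{\mathbb{P}_\Sigma}(\beta)) \otimes H^2(\mathcal{O}_{\mathbb{P}_\Sigma}(-\eta))
\]
and observe that the outer terms vanish: the left by the hypothesis of the Theorem and the right as the $i=2$ consequence of the $0$-regularity of $\eta$. The case $k=-2$ is entirely parallel: the fragment
\[
H^2(\mathcal{O}_{\mathbb{P}_\Sigma}(\beta-2\eta)) \longrightarrow H^3(M_0(-2\eta)) \longrightarrow H^0(\mathcal{O}_{\mathbb{P}_\Sigma}(\beta)) \otimes H^3(\mathcal{O}_{\mathbb{P}_\Sigma}(-2\eta))
\]
has its left term zero by hypothesis and its right term zero by $0$-regularity.

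There is no genuine obstacle: the three cohomological conditions placed on $\beta$ and $\eta$ in the statement of the Theorem are calibrated precisely so that the $1$-regularity of $M_0$ drops out by assembling these three long exact sequences. The conceptual gain over Bruzzo--Grassi is that we do not need to demand $\eta$ to be $(-1)$-regular here: the weaker $0$-regularity suffices, with Proposition \ref{Prop:CorLaza} later taking over the control of the tensor and exterior powers of $M_0$ in the remaining steps of the proof of the Theorem.
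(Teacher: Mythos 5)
Your proposal is correct and follows essentially the same route as the paper: twist the defining sequence of $M_0$ by $\mathcal{O}_{\mathbb{P}_\Sigma}(k\eta)$ for $k=0,-1,-2$ and sandwich each $H^{q}(M_0((1-q)\eta))$ between a term killed by the Theorem's hypotheses and a term killed by the $0$-regularity of $\eta$ with respect to itself. If anything, you are slightly more explicit than the paper in the $k=0$ case, where the surjectivity of the evaluation map must be supplemented by the vanishing of $H^1(\mathbb{P}_\Sigma,\mathcal{O}_{\mathbb{P}_\Sigma})$ to conclude $H^1(M_0)=0$.
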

\begin{proof}
	By definition, we would like to show that $\operatorname{H}^q(\mathbb{P}_\Sigma, M_0((1-q)\eta))=0$ for all positive $q$.
	Consider the long exact sequence in cohomology associated with
	\begin{equation}\label{eq:def-M0}\tag{$\dagger$}
		0\rightarrow M_0 \rightarrow W_0 \otimes \mathcal{O}_{\mathbb{P}_\Sigma}\rightarrow \mathcal{O}_{\mathbb{P}_\Sigma}(\beta) \rightarrow 0\,,
	\end{equation}		
	that is
	\[
	0\rightarrow \operatorname{H}^0(\mathbb{P}_\Sigma,M_0) \rightarrow \operatorname{H}^0(\mathbb{P}_\Sigma,W_0 \otimes \mathcal{O}_{\mathbb{P}_\Sigma}) \stackrel{\pi}{\rightarrow} \operatorname{H}^0(\mathbb{P}_\Sigma,\mathcal{O}_{\mathbb{P}_\Sigma}(\beta)) \rightarrow \operatorname{H}^1(\mathbb{P}_\Sigma,M_0) \rightarrow \cdots
	\]
	Since $\pi$ is surjective, $\operatorname{H}^1(\mathbb{P}_\Sigma,M_0)=0$.
	
	The vanishing of $\operatorname{H}^2(\mathbb{P}_\Sigma,M_0(-\eta))$ is obtained by tensoring $\eqref{eq:def-M0}$ by $\mathcal{O}_{\mathbb{P}_\Sigma}(-\eta)$ and considering that $\operatorname{H}^2(\mathbb{P}_\Sigma,M_0(-\eta))$ lies between two zeros in the long exact sequence
	\[
	\cdots \rightarrow \operatorname{H}^1(\mathbb{P}_\Sigma,\mathcal{O}_{\mathbb{P}_\Sigma}(\beta-\eta)) \rightarrow \operatorname{H}^2(\mathbb{P}_\Sigma,M_0(-\eta)) \rightarrow \operatorname{H}^2(\mathbb{P}_\Sigma,W_0\otimes\mathcal{O}_{\mathbb{P}_\Sigma}(-\eta)) \rightarrow \cdots
	\]
	Note that $\operatorname{H}^2(\mathbb{P}_\Sigma,W_0\otimes\mathcal{O}_{\mathbb{P}_\Sigma}(-\eta))=0$ because $\eta$ is $0$-regular with respect to itself.
	
	One argues similarly for $\operatorname{H}^3(\mathbb{P}_\Sigma,M_0(-2\eta))$. 
\end{proof}

\begin{Corollary}\label{cor:mah}
	%
	For all $i=0, \dots, c$, $\operatorname{H}^q(\mathbb{P}_\Sigma,\wedge^p M_i(n\eta))=0$ if $q\geq 1$ and $n+q\geq p+i$.
\end{Corollary}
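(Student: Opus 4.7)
The plan is to induct on $i$ (with a secondary induction on $p$), building on the preceding proposition. The engine of the induction is obtained by applying the snake lemma to the codimension-one inclusion $W_i \subset W_{i-1}$ in the defining sequences of $M_i$ and $M_{i-1}$: one extracts the short exact sequence
\[
0 \to M_i \to M_{i-1} \to \mathcal{O}_{\mathbb{P}_\Sigma} \to 0,
\]
whose $p$-th exterior power reads
\[
0 \to \wedge^p M_i \to \wedge^p M_{i-1} \to \wedge^{p-1} M_i \to 0
\]
(the right-hand factor being a line bundle, so no higher wedge contribution appears).

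The base case $i = 0$ is handled directly: the preceding proposition gives that $M_0$ is $1$-regular with respect to $\eta$, and since $\eta$ is $0$-regular and globally generated (being ample on the toric variety), Proposition \ref{Prop:CorLaza} yields that $\wedge^p M_0$ is $p$-regular. By monotonicity of Castelnuovo-Mumford regularity this translates into $\operatorname{H}^q(\mathbb{P}_\Sigma, \wedge^p M_0(n\eta)) = 0$ whenever $q \geq 1$ and $n + q \geq p$. The inner-induction base case $p = 0$ reduces to $\operatorname{H}^q(\mathbb{P}_\Sigma, \mathcal{O}_{\mathbb{P}_\Sigma}(n\eta)) = 0$ for $q \geq 1$ and $n + q \geq i \geq 0$, which is immediate from the $0$-regularity of $\eta$.

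For $q \geq 2$ the inductive step is a routine diagram chase on the segment
\[
\operatorname{H}^{q-1}(\mathbb{P}_\Sigma, \wedge^{p-1} M_i(n\eta)) \to \operatorname{H}^q(\mathbb{P}_\Sigma, \wedge^p M_i(n\eta)) \to \operatorname{H}^q(\mathbb{P}_\Sigma, \wedge^p M_{i-1}(n\eta))
\]
of the long exact sequence associated with the twisted exterior-power sequence: the right term vanishes by the outer induction on $i$ (since $n + q \geq p + (i-1)$) and the left term by the inner induction on $p$ (the inequality $n + (q-1) \geq (p-1) + i$ is identical to the hypothesis $n+q \geq p+i$).

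The main obstacle I foresee is the $q = 1$ case: there the same chase only presents $\operatorname{H}^1(\mathbb{P}_\Sigma, \wedge^p M_i(n\eta))$ as the cokernel of the map $\operatorname{H}^0(\mathbb{P}_\Sigma, \wedge^p M_{i-1}(n\eta)) \to \operatorname{H}^0(\mathbb{P}_\Sigma, \wedge^{p-1} M_i(n\eta))$, so a surjectivity on global sections is required rather than a plain vanishing. To overcome this I would pivot to the Koszul-type short exact sequence
\[
0 \to \wedge^p M_i \to \wedge^p W_i \otimes \mathcal{O}_{\mathbb{P}_\Sigma} \to \wedge^{p-1} M_i \otimes \mathcal{O}_{\mathbb{P}_\Sigma}(\beta) \to 0
\]
obtained directly from the defining sequence of $M_i$: once twisted by $n\eta$, the term $\wedge^p W_i \otimes \operatorname{H}^1(\mathbb{P}_\Sigma, \mathcal{O}_{\mathbb{P}_\Sigma}(n\eta))$ vanishes by $0$-regularity of $\eta$, and the problem is reduced to the surjectivity of the multiplication-type map $\wedge^p W_i \otimes \operatorname{H}^0(\mathbb{P}_\Sigma, \mathcal{O}_{\mathbb{P}_\Sigma}(n\eta)) \to \operatorname{H}^0(\mathbb{P}_\Sigma, \wedge^{p-1} M_i(\beta + n\eta))$, which should then follow by iterating the Koszul resolution of $\wedge^p M_i$ and invoking condition $(\star)$ together with the base-point-freeness of the flag.
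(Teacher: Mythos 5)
Your skeleton (the flag, the snake-lemma sequence $0 \to M_i \to M_{i-1} \to \mathcal{O}_{\mathbb{P}_\Sigma} \to 0$, its exterior powers, and the base case $i=0$ via Proposition \ref{Prop:CorLaza} and Mumford's theorem) is the same as the paper's, but your inductive step has a genuine gap, and it is not confined to $q=1$: in your inner induction on $p$, the case $q=2$ for $\wedge^{p}M_i$ already invokes the case $q=1$ for $\wedge^{p-1}M_i$, so the unresolved $q=1$ vanishing contaminates all the cases $q\geq 2$ as well. Moreover, the pivot you propose for $q=1$ does not repair it: twisting $0 \to \wedge^p M_i \to \wedge^p W_i\otimes\mathcal{O}_{\mathbb{P}_\Sigma} \to \wedge^{p-1}M_i(\beta) \to 0$ by $n\eta$ only re-expresses $\operatorname{H}^1(\mathbb{P}_\Sigma,\wedge^p M_i(n\eta))=0$ as the surjectivity of $\wedge^p W_i\otimes \operatorname{H}^0(\mathcal{O}_{\mathbb{P}_\Sigma}(n\eta))\to \operatorname{H}^0(\wedge^{p-1}M_i(\beta+n\eta))$; condition $(\star)$ cannot supply this, since $(\star)$ involves the full space $\operatorname{H}^0(\mathcal{O}_{\mathbb{P}_\Sigma}(\beta))$ whereas the entire difficulty is the codimension-$i$ subspace $W_i$. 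Indeed, the surjectivity you would need is of exactly the same nature as Proposition \ref{prop-c-codim}, which is \emph{deduced from} this Corollary, so your route is circular at the crux.

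The missing idea (and the way Lemma 2 of \cite{Green-A-new-proof-of-the-explicit}, quoted in the paper, actually runs) is to read the exterior-power sequence one degree higher, with $\wedge^p M_i$ in the quotient position: $0 \to \wedge^{p+1}M_i \to \wedge^{p+1}M_{i-1} \to \wedge^{p}M_i \to 0$. After twisting by $n\eta$, the relevant segment of the long exact sequence is
\[
\operatorname{H}^q(\mathbb{P}_\Sigma,\wedge^{p+1}M_{i-1}(n\eta)) \to \operatorname{H}^q(\mathbb{P}_\Sigma,\wedge^{p}M_{i}(n\eta)) \to \operatorname{H}^{q+1}(\mathbb{P}_\Sigma,\wedge^{p+1}M_{i}(n\eta))\,.
\]
The first term vanishes by the induction on $i$, since $n+q\geq p+i=(p+1)+(i-1)$, and the third term is the statement for the same $i$ with $(p,q)$ replaced by $(p+1,q+1)$, which satisfies the same inequality; iterating, the recursion terminates because $\operatorname{H}^q=0$ as soon as $q>3=\dim\mathbb{P}_\Sigma$ (or once $p$ exceeds $\operatorname{rk} M_i$). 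This handles every $q\geq 1$, including $q=1$, in one stroke, and no surjectivity on global sections is ever required.
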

\begin{proof}
	As a straightforward consequence of Mumford's Theorem \cite[Theorem 1.8.5]{laza1} one has that a coherent sheaf $\mathcal{F}$ on $\mathbb{P}_\Sigma$ is $m$-regular with respect to $\eta$ if and only if
 \[
  H^{q}(\mathbb{P}_\Sigma, \mathcal{F}\otimes\mathcal{O}_{\mathbb{P}_\Sigma}(n\eta))=0\,,
 \]
for all integers $q>0$, $n\geq m-q$.
	Hence, for $i=0$ the statement is equivalent to $\wedge^p M_0$ being $p$-regular with respect to $\eta$ and this is proved in Proposition \ref{Prop:CorLaza}. The proof then follows as for Lemma 2 in \cite{Green-A-new-proof-of-the-explicit}.
\end{proof}

\begin{Proposition}\label{prop-c-codim}
	If $c=\operatorname{codim}W\leq n$, then the map $W \otimes \operatorname{H}^0(\mathbb{P}_\Sigma,\mathcal{O}_{\mathbb{P}_\Sigma}(n\eta)) \rightarrow\operatorname{H}^0(\mathcal{O}_{\mathbb{P}_\Sigma}(\beta+n\eta))$ is surjective.
\end{Proposition}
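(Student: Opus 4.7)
The plan is to extract the surjectivity from a single cohomology long exact sequence built from the definition of $M_c$, feeding in Corollary \ref{cor:mah}; essentially all the work was already discharged in Proposition \ref{Prop:CorLaza} and its corollary, so this statement should drop out almost for free.

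Concretely, since $W = W_c$ was chosen base-point-free, the evaluation map $W \otimes \mathcal{O}_{\mathbb{P}_\Sigma} \to \mathcal{O}_{\mathbb{P}_\Sigma}(\beta)$ is surjective, and the defining sequence
\[
0 \longrightarrow M_c \longrightarrow W \otimes \mathcal{O}_{\mathbb{P}_\Sigma} \longrightarrow \mathcal{O}_{\mathbb{P}_\Sigma}(\beta) \longrightarrow 0
\]
is exact. I would tensor by the locally free sheaf $\mathcal{O}_{\mathbb{P}_\Sigma}(n\eta)$ (which preserves exactness) and extract from the resulting long exact sequence in cohomology the fragment
\[
W \otimes H^0(\mathbb{P}_\Sigma, \mathcal{O}_{\mathbb{P}_\Sigma}(n\eta)) \longrightarrow H^0(\mathbb{P}_\Sigma, \mathcal{O}_{\mathbb{P}_\Sigma}(\beta + n\eta)) \longrightarrow H^1(\mathbb{P}_\Sigma, M_c(n\eta)),
\]
using that $W$ is a mere finite-dimensional vector space to pull the $W$-factor outside cohomology on the left. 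Surjectivity of the multiplication map is then equivalent to the vanishing $H^1(\mathbb{P}_\Sigma, M_c(n\eta)) = 0$.

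To produce that vanishing I would invoke Corollary \ref{cor:mah} with $p = 1$, $q = 1$, and $i = c$: the numerical hypothesis $n + q \geq p + i$ becomes $n + 1 \geq 1 + c$, i.e., $c \leq n$, which is exactly the standing assumption. I do not foresee any genuine obstacle; all the heavy lifting (the $1$-regularity of $M_0$, and its propagation to $\wedge^{p} M_i$ along the flag) has already happened upstream, and the only point to verify here is the arithmetic matching $c \leq n \Longleftrightarrow n + 1 \geq 1 + c$ between the proposition's hypothesis and the numerical condition in the corollary.
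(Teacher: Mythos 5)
Your proof is correct and coincides with the paper's own argument: twist the defining sequence of $M_c$ by $\mathcal{O}_{\mathbb{P}_\Sigma}(n\eta)$, pass to cohomology, and kill $\operatorname{H}^1(\mathbb{P}_\Sigma, M_c(n\eta))$ via Corollary \ref{cor:mah} with $p=q=1$, $i=c$, which requires exactly $c\leq n$. (Your explicit mention of base-point-freeness of $W$ and the check $n+1\geq 1+c$ is the same bookkeeping the paper leaves implicit.)
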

\begin{proof}
	We consider the short exact sequence
	\[
		0\rightarrow M_c \rightarrow W \otimes \mathcal{O}_{\mathbb{P}_\Sigma}\rightarrow\mathcal{O}_{\mathbb{P}_\Sigma}(\beta) \rightarrow 0\,,
	\]
	and twist it by $\mathcal{O}_{\mathbb{P}_\Sigma}(n\eta)$. We pass to the long exact sequence in cohomology:
	\[
		\dots \rightarrow \operatorname{H}^0(\mathbb{P}_\Sigma,W \otimes\mathcal{O}_{\mathbb{P}_\Sigma}(n\eta))\rightarrow \operatorname{H}^0(\mathbb{P}_\Sigma,\mathcal{O}_{\mathbb{P}_\Sigma}(\beta+n\eta)) \rightarrow \operatorname{H}^1(\mathbb{P}_\Sigma,M_c(n\eta))\rightarrow \dots
	\]
	By applying Corollary \ref{cor:mah} with $p=q=1$, one gets that $\operatorname{H}^1(\mathbb{P}_\Sigma,M_c(-n\eta))$ is zero.
\end{proof}

Finally, the proof of the Theorem follows precisely as the one of Theorem 4.11 of \cite{ugo_grassi} with the only difference that we use the previous proposition instead of Proposition 4.10 of \cite{ugo_grassi}.

\subsection*{Acknowledgments}
This work has been inspired by a talk given by Antonella Grassi at the Special Session on 
\emph{Topics in Toric Geometry} in the AMS Sectional Meeting at Northeastern University. 
In subsequent chat with the second author and Emanuele Ventura, she conjectured the improvement of the bound. We thank her for that cue and, together with Ugo Bruzzo, also for the interesting correspondence that ensued.

The first author is supported by the FAPESP postdoctoral grant no.2017/22052-9 (Bolsa Estágio de Pesquisa no Exterior). The research this paper is based upon was made while he was visiting the Department of Mathematics at Northeastern University (Boston), and he wishes to thank that institution for hospitality.
The second author has been partially supported by the \emph{Zelevinsky Research Instructor Fund} and he is currently supported by the \emph{Knut and Alice Wallenberg Fundation} and by the \emph{Royal Swedish Academy of Science}.

\bibliographystyle{alpha}

\begin{thebibliography}{Gre89}

\bibitem[BG12]{BruzzoGrassi-Picard-group-hypersurfaces}
Ugo Bruzzo and Antonella Grassi.
\newblock Picard group of hypersurfaces in toric 3-folds.
\newblock {\em Internat. J. Math.}, 23(2):1250028, 14, 2012.

\bibitem[BG17]{ugo_grassi}
U.~Bruzzo and A.~Grassi.
\newblock The {N}oether-{L}efschetz locus of surfaces in toric threefolds.
\newblock {\em Comm. Cont. Math.}, pages 1--22, 2017.

\bibitem[Gre84]{Green-KoszulCohomology}
Mark~L. Green.
\newblock Koszul cohomology and the geometry of projective varieties. {II}.
\newblock {\em J. Differential Geom.}, 20(1):279--289, 1984.

\bibitem[Gre88]{Green-A-new-proof-of-the-explicit}
Mark~L. Green.
\newblock A new proof of the explicit {N}oether-{L}efschetz theorem.
\newblock {\em J. Differential Geom.}, 27(1):155--159, 1988.

\bibitem[Gre89]{Green-Components}
Mark~L. Green.
\newblock Components of maximal dimension in the {N}oether-{L}efschetz locus.
\newblock {\em J. Differential Geom.}, 29(2):295--302, 1989.

\bibitem[Laz04]{laza1}
R.~Lazarsfeld.
\newblock {\em Positivity in algebraic geometry. I}, volume~48 of {\em
  Ergebnisse der Mathematik und ihrer Grenzgebiete (3)}.
\newblock springer-Verlag, Berlin, 2004.

\bibitem[Voi89]{Voisin-Composantes}
Claire Voisin.
\newblock Composantes de petite codimension du lieu de {N}oether-{L}efschetz.
\newblock {\em Comment. Math. Helv.}, 64(4):515--526, 1989.

\end{thebibliography}
\def\cprime{$'$} \def\cprime{$'$} \def\cprime{$'$} \def\cprime{$'$}

\vspace{0.5cm}

\noindent
 {\scshape Valeriano Lanza}\\
 {\scshape IMECC -- UNICAMP\\Rua Sérgio Buarque de Holanda, 651\\ 13083-859 Campinas, SP, Brazil}\\
 {\itshape E-mail address}: \texttt{valeriano@ime.unicamp.br}

\vspace{0.5cm}

\noindent
 {\scshape Ivan Martino}\\
 {\scshape Department of Mathematics, Northeastern University,\\ Boston, MA 02115, USA}.\\
 {\itshape E-mail address}: \texttt{i.martino@northeastern.edu}
\end{document}